\newtheorem*{proposition}{Proposition}
\newtheorem*{thm}{Theorem}
\newtheorem*{lemma}{Lemma}
\begin{document}

\title[]{Bad Science Matrices}

\author[]{Stefan Steinerberger}

\address{Department of Mathematics, University of Washington, Seattle, WA 98195, USA}
 \email{steinerb@uw.edu}

\subjclass[2020]{52A20, 68R05} 
\thanks{The author is partially supported by the NSF (DMS-2123224).}

\begin{abstract} Inspired by the bad scientist who keeps repeating an experiment 20 times to get a single outcome with $p < 0.05$, we consider matrices $A \in \mathbb{R}^{n \times n}$ whose rows are normalized in $\ell^2$ and for which $2^{-n}\sum_{x \in \left\{-1,1\right\}^n} \|Ax\|_{\ell^{\infty}}$ is large. They correspond to affine transformations of the discrete unit cube to points with, on average, at least one large coordinate. Such matrices can be seen as a collection of fair tests on a fair coin where at least one outcome is typically atypical. We prove that, as $n \rightarrow \infty$, the quantity can scale as
 $$  \max_{A \in \mathbb{R}^{n \times n}} \frac{1}{2^{n}}\sum_{x \in \left\{-1,1\right\}^n} \|Ax\|_{\ell^{\infty}} = (1+o(1)) \cdot \sqrt{2\log{n}}.$$
We also present candidate maximizers up to dimension $n \leq 8$ which appear to be highly structured and have nice closed-form solutions.
\end{abstract}
\maketitle

\maketitle

\section{Introduction and Result}
 \subsection{The dishonest scientist.}
Suppose we are given a (fair) coin which is going to be tossed $n$ times resulting in $x_1, \dots, x_n \in \left\{-1,1\right\}$. We are interested in finding out whether the coin is biased or unbiased. A natural first step would be to recall that if the coin was fair, then, as $n$ gets large, we have
$$ \frac{x_1 + x_2 + \dots + x_n}{\sqrt{n}} \implies \mathcal{N}(0,1).$$
More generally, if $a \in \mathbb{R}^n$ is any vector with unit length, $\|a\|_{\ell^2} = 1$, then the random inner product $ X_{a} = \left\langle a, (x_1, x_2, \dots, x_n) \right\rangle$ has expectation
$ \mathbb{E}X_{a} = 0$  and $\mathbb{V}X_{a}  = 1.$
Moreover, $X_{a}$ is as unlikely to yield a large number as the average is: it follows from Hoeffding's inequality \cite[Theorem 2.2.2]{vershynin} that, for any $t>0$,
$$ \mathbb{P}\left( |X_{a}| \geq t\right) \leq 2 e^{-t^2/2}.$$
In particular, if we fix a vector $a \in \mathbb{R}^n$ and then observe $n$ coin tosses via the random variable $X_a$, then we may conclude that $X_a$ being large is indeed indicative of a bias in the coin. We could now imagine a dishonest scientist going ahead and selecting $n$ unit vectors $a_1, a_2, \dots, a_n$ a priori and then evaluating the random coin toss by looking at the random variables $X_{a_1}, X_{a_2}, \dots, X_{a_n}$. If any of these random variables, say $X_{a_i}$, is large, then the dishonest scientist obtains a small $p$ value, concludes that the coin is biased and can publish the (incorrect) result: after all, the test $X_{a_i}$ was fair.
The question is now: how dishonest can the scientist be?\\

 Using the matrix $A \in \mathbb{R}^{n \times n}$ whose rows are given by the vectors $a_1, \dots, a_n$ (which we always assume to be normalized in $\ell^2$), we measure the dishonesty via
$$ \beta(A) = \frac{1}{2^n} \sum_{x \in \left\{-1,1\right\}^n} \|Ax\|_{\ell^{\infty}}.$$
This corresponds to the average largest value that is attained by this dishonest testing mechanism for a random sample of coin tosses. Note that $\beta(\mbox{Id}_{n \times n}) = 1$. One could argue that the most `honest' matrix is a rescaling of the all 1's matrix
$$ A = \frac{1}{\sqrt{n}} \mathbf{1} \qquad \mbox{for which} \qquad \beta(A) \sim \sqrt{\frac{2}{\pi}} = 0.79788\dots$$
corresponding to simply averaging the result and accepting that outcome without running any further tests. Regarding the smallest value $\beta$ can attain, it seems that $\beta(A) \geq 1/\sqrt{2}$ is a plausible guess; we have not pursued this. Instead we ask: how large can $\beta(A)$ be? Note that the question is also of obvious interest from a purely geometric perspective (measuring images of the discrete cube in $\ell^{\infty}$).

\subsection{Results} We start with a simple heuristic that points to the right scale.
Taking the matrix $A$ to be orthogonal, any two random variables $\left\langle a_i, x \right\rangle$ and $\left\langle a_j, x \right\rangle$ are `almost' independent. We could thus optimistically argue that we get essentially $n$ nearly independent and nearly standard Gaussian variables. The maximum of $n$ independent standard Gaussian random variables is $\sim \sqrt{2 \log{n}}$. 

\begin{thm}
Among matrices with rows satisfying $\|a_i\|_{\ell^2} \leq 1$, as $n \rightarrow \infty$,
$$\max_{A \in \mathbb{R}^{n \times n}} \frac{1}{2^{n}}\sum_{x \in \left\{-1,1\right\}^n} \|Ax\|_{\ell^{\infty}} =(1+o(1)) \cdot \sqrt{2\log{n}}.$$
\end{thm}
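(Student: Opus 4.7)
The plan is to prove the matching upper and lower bounds by fairly different arguments.

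\emph{Upper bound.} Fix any admissible $A$. Since each row satisfies $\|a_i\|_{\ell^2} \leq 1$, the Hoeffding estimate already stated in the introduction applied row by row, together with a union bound over the $n$ rows, gives
$$ \mathbb{P}_{x}(\|Ax\|_{\ell^\infty} \geq t) \leq \min(1,\, 2 n e^{-t^2/2}).$$
Using the layer-cake identity $\mathbb{E}\|Ax\|_{\ell^\infty} = \int_0^\infty \mathbb{P}(\|Ax\|_{\ell^\infty} \geq t)\,dt$ and splitting the integral at $t_0 = \sqrt{2\log n + 2\log\log n}$, the piece below $t_0$ contributes at most $t_0 = (1+o(1))\sqrt{2\log n}$, and the piece above $t_0$ vanishes because $\int_{t_0}^\infty 2n e^{-t^2/2} dt = o(1)$. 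This yields $\beta(A) \leq (1+o(1))\sqrt{2\log n}$ uniformly in $A$.

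\emph{Lower bound via a random Gaussian matrix.} Motivated by the heuristic in the introduction, I would construct a matrix by the probabilistic method. Let $G \in \mathbb{R}^{n \times n}$ have i.i.d.\ $\mathcal{N}(0,1)$ entries and set $A = G/c_n$, where $c_n = \sqrt{n + C\sqrt{n \log n}}$ is chosen with $C$ large enough (say $C^2 > 15$) that standard $\chi^2_n$ concentration plus a union bound give $\mathbb{P}(\max_i \|g_i\|_{\ell^2} \leq c_n) \geq 1 - n^{-2}$. The key observation is that because each $x \in \{-1,1\}^n$ has fixed $\ell^2$-norm $\sqrt{n}$ and the standard Gaussian is rotationally invariant, for any \emph{fixed} $x$ the coordinates $\langle g_i, x\rangle$ are i.i.d.\ $\mathcal{N}(0,n)$. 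Hence, averaging first over $G$,
$$\mathbb{E}_G \|Gx\|_{\ell^\infty} \;=\; \sqrt{n}\cdot \mathbb{E}\max_{1\leq i \leq n} |Z_i| \;=\; (1+o(1))\sqrt{n}\,\sqrt{2\log n},$$
by the classical asymptotic for the expected maximum of $n$ i.i.d.\ absolute standard normals. Since $c_n = \sqrt{n}(1+o(1))$, dividing by $c_n$ instead of $\sqrt{n}$ costs only a $1-o(1)$ factor, and taking $\mathbb{E}_x$ and Fubini gives $\mathbb{E}_G \mathbb{E}_x \|(G/c_n)x\|_{\ell^\infty} \geq (1-o(1))\sqrt{2\log n}$.

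\emph{Extracting a deterministic admissible matrix.} The step I expect to require the most care is coupling the two desired properties of $G$: admissibility ($\max_i \|g_i\|_{\ell^2} \leq c_n$) and a large value of $\mathbb{E}_x\|(G/c_n)x\|_{\ell^\infty}$. Letting $E$ denote the admissibility event, the crude pointwise bound $\|Gx\|_{\ell^\infty} \leq \sqrt{n}\max_i\|g_i\|_{\ell^2}$ combined with $\mathbb{E}_G(\max_i\|g_i\|_{\ell^2})^2 \leq \mathbb{E}_G\sum_i \|g_i\|_{\ell^2}^2 = n^2$ and Cauchy--Schwarz shows that the contribution from $E^c$ to $\mathbb{E}_G\mathbb{E}_x \|(G/c_n) x\|_{\ell^\infty}$ is at most $n \cdot \sqrt{\mathbb{P}(E^c)} \leq 1$, which is $o(\sqrt{\log n})$. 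Hence there is a realization of $G$ satisfying both conditions, giving an admissible deterministic $A$ with $\beta(A) \geq (1-o(1))\sqrt{2\log n}$. Combining with the upper bound finishes the proof.
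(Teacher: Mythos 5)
Your upper bound is essentially the paper's own: Hoeffding per row, union bound, then control the tail; the only cosmetic difference is that you integrate via the layer-cake formula where the paper sums over dyadic-type shells $\sqrt{k\log n}$. No comment needed there.

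Your lower bound, however, takes a genuinely different route, and it works. The paper draws $A$ with i.i.d.\ $\pm 1/\sqrt{n}$ entries, so rows are automatically unit vectors and each coordinate of $Ax_0$ is a rescaled Binomial; the price is that a crude Chernoff bound is not sharp enough at the critical threshold $\sqrt{(2-\varepsilon)\log n}$, so the paper invokes Tusn\'ady's lemma to get the matching anti-concentration $\mathbb{P}\bigl((Ax_0)_i \geq \sqrt{(2-\varepsilon)\log n}\bigr) \gtrsim n^{\delta-1}$, and then runs a double count over $(A,x)$ pairs. You instead draw Gaussian entries, so that for fixed $x$ the coordinates $\langle g_i,x\rangle$ are \emph{exactly} i.i.d.\ $\mathcal{N}(0,n)$ and the classical asymptotic $\mathbb{E}\max_i |Z_i| = (1+o(1))\sqrt{2\log n}$ applies with no anti-concentration lemma at all; the price you pay is that Gaussian rows are not automatically admissible, which you handle by rescaling by $c_n = \sqrt{n + C\sqrt{n\log n}}$, a $\chi^2_n$ concentration bound plus union bound to make the bad event $E^c$ have probability $\leq n^{-2}$, and a Cauchy--Schwarz cleanup using the pointwise bound $\|Gx\|_{\ell^\infty} \leq \sqrt{n}\max_i\|g_i\|_{\ell^2}$ to show $E^c$ contributes $O(1) = o(\sqrt{\log n})$. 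One pedantic point worth spelling out in a final write-up: from $\mathbb{E}_G[\mathbf{1}_E\,\mathbb{E}_x\|(G/c_n)x\|_{\ell^\infty}] \geq (1-o(1))\sqrt{2\log n}$ and nonnegativity of the integrand, there must exist $G_0 \in E$ attaining at least the conditional average $\geq (1-o(1))\sqrt{2\log n}$, and $G_0 \in E$ is exactly the admissibility of $A = G_0/c_n$. In short: you trade the paper's Tusn\'ady anti-concentration for a Gaussian ensemble plus a norm-truncation argument. Both are correct; yours is arguably more self-contained, relying only on textbook Gaussian and $\chi^2$ facts, while the paper's keeps the construction inside the natural $\{\pm 1/\sqrt{n}\}$ ensemble at the cost of citing a sharper binomial-to-Gaussian comparison.
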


The proof shows that random matrices with iid entries $\pm 1/\sqrt{n}$ attain this asymptotic rate. The result says very little for specific values of $n$ and, at least numerically, extremal matrices seem to exhibit quite a bit more structure than generic random matrices. Needless to say, this difference is presumably subtle and hiding in the $o(1)$ term. One could argue that the smoothing coming from the central limit theorem determines the leading order and the real problem is hiding in the lower order terms. It would be interesting to have a more precise asymptotic expansion, this might be quite difficult. A construction of Kunisky \cite{kun} can be adapted to yield an asymptotically suboptimal but completely explicit family of matrices.
 \begin{proposition} If $n$ is a power of 2, there exists an explicit matrix (the transpose of the discrete Haar transform) such that $\|Ax\|_{\ell^{\infty}}$ is independent of $x \in \left\{-1,1\right\}^n$ and
 $$  \frac{1}{2^{n}}\sum_{x \in \left\{-1,1\right\}^n} \|Ax\|_{\ell^{\infty}} = \sqrt{\log_2{n} + 1} \sim  \sqrt{1.442\log{n}}.$$
 \end{proposition}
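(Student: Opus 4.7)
The plan is to write down the matrix $A$ explicitly and then verify $\|Ax\|_{\ell^{\infty}} = \sqrt{k+1}$ for every $x \in \{-1,1\}^{n}$, where $k = \log_{2} n$, by sandwiching between matching upper and lower bounds. Enumerate the un-normalized Haar basis on $\{0,1,\ldots,n-1\}$ as a scaling function $h_{0} \equiv 1$ together with wavelets $h_{s,\ell}$ at scales $s \in \{0, 1, \ldots, k-1\}$ and offsets $\ell \in \{0, \ldots, 2^{s}-1\}$, each taking value $+1$ on the left half and $-1$ on the right half of a dyadic interval of length $n/2^{s}$ and $0$ elsewhere. Define $A \in \mathbb{R}^{n \times n}$ by
\[
A_{ij} \;=\; \frac{h_{j}(i)}{\sqrt{k+1}},
\]
with $j$ ranging over some indexing of the $n$ Haar functions.

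The combinatorial content is that at any fixed position $i$ exactly $k+1$ of the Haar functions are nonzero: the scaling function together with the unique wavelet at each scale whose support contains $i$. Each of these $k+1$ values is $\pm 1$, so every row of $A$ has $\ell^{2}$ norm $1$, and the same observation shows
\[
(Ax)_{i} \;=\; \frac{1}{\sqrt{k+1}} \sum_{j : h_{j}(i) \neq 0} h_{j}(i)\, x_{j}
\]
is a signed sum of $k+1$ unit quantities rescaled by $1/\sqrt{k+1}$, giving the upper bound $\|Ax\|_{\ell^{\infty}} \leq \sqrt{k+1}$ by the triangle inequality.

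The matching lower bound is the substantive step: for every $x \in \{-1,1\}^{n}$ I want to produce a position $i$ at which all $k+1$ products $h_{j}(i)\, x_{j}$ share a common sign. Organize the Haar functions as the nodes of a complete binary tree: the root is $h_{0}$, each internal node is a wavelet, and the two children of a wavelet correspond to its left and right halves, with leaves labeled by the positions $i$. The path from the root to a leaf $i$ visits exactly the $k+1$ Haar functions nonzero at $i$. Now descend this tree greedily aiming for the common sign $\sigma := x_{0}$. At a wavelet $h_{j}$ we need $h_{j}(i)\, x_{j} = \sigma$, i.e.\ $h_{j}(i) = \sigma\, x_{j}^{-1} \in \{-1,+1\}$, which uniquely prescribes whether to take the left or right child. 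This yields a well-defined descent, and the leaf $i$ at which it terminates realizes $(Ax)_{i} = \sigma\sqrt{k+1}$. Combined with the upper bound this forces $\|Ax\|_{\ell^{\infty}} \equiv \sqrt{k+1}$, so averaging over $x$ gives $\beta(A) = \sqrt{k+1} = \sqrt{\log_{2} n + 1}$. The only nontrivial step is the tree descent, which is really just the observation that at each internal node exactly one of the two children is consistent with the given wavelet coefficient, so one never gets stuck.
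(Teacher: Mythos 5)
Your proof is correct and takes essentially the same approach as the paper: the paper presents the matrix via the recursion $A_{n+1} = \begin{pmatrix} A_n & I \\ A_n & -I \end{pmatrix}$ and proves $\|A_n x\|_{\ell^\infty} = n+1$ by induction, while you present the same matrix directly through the Haar basis and replace the induction with a greedy tree descent; the descent is exactly the unrolled form of the paper's inductive step (at each level, pick the half that keeps the extremal entry growing). Your version is a bit more explicit about the upper bound (triangle inequality on $k+1$ nonzero entries per row), which the paper leaves implicit, but the core idea — at every $x$ there is a leaf where all $k+1$ sign contributions align — is identical.
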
 
 This construction leads to the currently best known results for $n=2,4,8$ (see \S 2); starting at $n\geq 128$ it seems that random $\pm 1/\sqrt{n}$ matrices will typically have a larger value of $\beta$.
The existence of such constructions does suggest that one might be hopeful about further connections with the geometry of number, special lattices, coding theory as well as combinatorial aspects of design theory. One reason why we believe the problem to be interesting is that extremal matrices seem to have an interesting structure, see \S 2 for a discussion up to $n \leq 8$.
\begin{quote}
\textbf{Problem.} Which $A \in \mathbb{R}^{n \times n}$ maximize $\sum_{x \in \left\{-1,1\right\}^n} \|Ax\|_{\ell^{\infty}}$~?\\
\end{quote}

Already for relatively small values of $n$, the question appears quite nontrivial. Even just approaching it numerically seems challenging as the number of variables grows quadratically in the dimension. Moreover, with a Central Limit Theorem smoothing in each coordinate, the inverse question is naturally hard. Finally, when working numerically, there appear to be a large number of local maxima that are not global. Some of these seem to be attracting numerical methods; there are matrices in different dimensions that are not optimal but showed up more frequently in the experiments than matrices that perform better (see below).

\begin{figure}[h!]
\begin{center}
\boxed{
\begin{tabular}{ l c c c c c c c }
 $ n$  & 2 & 3  & 4 & 5 & 6 & 7 & 8 \\
 $ \beta_n \geq $ & 1.41 & 1.57 & 1.73 & 1.79 & 1.86 & 1.93 & 2 \\
\end{tabular}
}
\end{center}
\caption{Lower bounds for $\beta_n = \max \beta(A)$ when $n \leq 8$, see \S 2.}
\end{figure}

\subsection{Related results.}  We are not aware of any directly related results.  There are many results surrounding vector balancing problems \cite{bana2, bana, haj, kun, li, reis, spencer}. There, one is given vectors $x_1, \dots, x_m$ in some space $X$ and is asked to pick signs so that $\| \pm x_1 \pm x_2 \pm \dots \pm x_m\|_X$ is small. This is somewhat dual as one normalizes the columns of the matrix (as opposed to the rows): vector balancing instead of functional balancing. Vector balancing problems are more difficult than our problem because they ask for a minimum over all signs while we take an expectation. The most famous problem in this direction is perhaps the K\'omlos conjecture: given $v_1, \dots, v_n \in \mathbb{R}^n$, all normalized in $\ell^2$, are there signs with $\| \pm v_1 \pm v_2 \pm \dots \pm v_n\|_{\ell^{\infty}} \leq K$ for some $K$ independent of $n$? The result is known to be true when $K$ is replaced by $c\sqrt{\log{n}}$, see \cite{bana3}. One interesting aspect of the story is that it is not easy to get nontrivial lower bounds on $K$: the best result is due to Kunisky \cite{kun} who proved $K \geq \sqrt{2} + 1$. The construction of Kunisky appears to be either optimal or near-optimal for our problem when $n=2,4,8$ (and provably non-optimal in high dimensions). This raises the natural question whether extremal configurations for our problem might inspire good examples for the Koml\'os conjecture (or might do so after a small modification): having a large average does not imply the minimum is large but a frequently attained small minimum is not good for the average either.\\
The problem has a number of geometric interpretations. Restricting to orthogonal matrices $Q \in \mathbb{R}^{n \times n}$, we are reminded that random rotations preserve the $\ell^2-$norm but that the largest entry in a random vector on the ball gains a logarithmic factor. Another way of thinking about our problem is that we want to place thickened hyperplanes
$ H_{i} = \left\{ x \in \mathbb{R}^n: \left|\left\langle x, a_i \right\rangle\right| \leq \beta(A) \right\}$
in such a way that as many of the points in the discrete cube as possible are far away from at least one hyperplane. Covering lattice points of the discrete cube with hyperplanes is a classic subject, see for example Alon-F\"uredi \cite{alon}. There is a beautiful question by Henk-Tsintsifas \cite{henk} (maybe more related in spirit but it should be more widely known): what is the largest constant $\gamma_n$ such that any translation and rotation of the scaled unit cube $\gamma_n [0,1]^d$ always intersects the lattice $\mathbb{Z}^d$?  Henk-Tsintsifas \cite{henk} show that $\gamma_2  = \gamma_3 = \sqrt{2}$.  It is known \cite{bana} that $\gamma_n \lesssim \sqrt{\log{n}}$. The Koml\'os conjecture would imply $\gamma_n \leq C$ independently of $n$.

\section{Small values of $n$}

\subsection{Summary.} The purpose of this section is to illustrate various interesting configurations that we found for small values of $n$. We tried a number of different methods, the most effective (for small $n$) was as follows:
\begin{enumerate}
\item Sample a number of random orthogonal matrices, compute their $\beta$ value.
\item Use the orthogonal matrix $O$ with the largest $\beta$ to start $M_0 = O$.
\item Add a small multiple of a random Gaussian matrix $G$
$$ M_n + \varepsilon G, ~\mbox{then renormalize the rows to obtain}~M_{n+1}$$
and see whether $\beta(M_{n+1}) > \beta(M_n)$. If not, then $M_{n+1} = M_n$.
\item Once the matrix stops being updated, decrease the step-size $\varepsilon$.
\item See whether the elements in $M_n$ resemble any known numbers.
\end{enumerate}

This approach does not scale particularly well.  On the other hand, it is easy to run
many times on different random initializations. The purpose of this section is to report on some of the configurations that were discovered in the process. We have not tried to prove
that any of our potentially optimal configurations are optimal and this does not appear to be easy. While easy
for $n=2$ and possibly doable for $n=3$ with enough patience, dealing with the sum of the maximum of $2^n$ vectors appears to be difficult. 

\subsection{The case $n=2$} 
The worst case for $n=2$ is given by 
$$ A = \frac{1}{\sqrt{2}} \begin{pmatrix} 1 &  1 \\ 1 & - 1 \end{pmatrix} $$
which rotates each of the four point $(\pm 1, \pm 1)$ into a point with coordinate of size at least $\sqrt{2}$. It is not difficult to see that this is optimal: since any extremal matrix has to have rows of $\ell^2-$norm 1, there is the natural ansatz
$$ A = \begin{pmatrix} \cos{(s)} &  \sin{(s)} \\ \cos{(t)} & - \sin{(t)} \end{pmatrix} $$
Then the sum $\sum_{x} \|Ax\|_{\ell^{\infty}}$ involves the maximum of 4 trigonometric functions which is bounded by $\sqrt{2}$. There are presumably a number of elementary geometric arguments. We notes 
$$  \max_{A \in \mathbb{R}^{2 \times 2}} \beta(A) = \sqrt{2} \sim 1.4142\dots$$

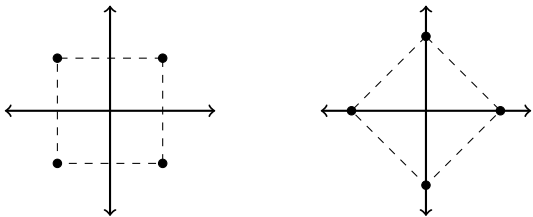
\begin{figure}[h!]
\begin{tikzpicture}[scale=0.7]
\draw [thick, <->] (-2, 0) -- (2, 0);
\draw [thick, <->] (0, -2) -- (0, 2);
\filldraw (1,1) circle (0.08cm);
\filldraw (-1,1) circle (0.08cm);
\filldraw (1,-1) circle (0.08cm);
\filldraw (-1,-1) circle (0.08cm);
\draw[dashed] (1,1) -- (-1,1) -- (-1,-1) -- (1,-1) -- (1,1) ;
\draw [thick, <->] (6-2, 0) -- (6+2, 0);
\draw [thick, <->] (6, -2) -- (6, 2);
\filldraw (6, 1.4142) circle (0.08cm);
\filldraw (6, -1.4142) circle (0.08cm);
\filldraw (6 + 1.4142, 0) circle (0.08cm);
\filldraw (6 - 1.4142, 0) circle (0.08cm);
\draw[dashed] (6, 1.4142) -- (6-1.4142,0) -- (6, -1.4142) -- (6+1.4142, 0) -- (6, 1.4142);
\end{tikzpicture}
\caption{The extremal case for $n=2$.}
\end{figure}

\subsection{The case $n=3$} This case already appears somewhat nontrivial. The best matrix we found is
$$ A = \frac{1}{2} \begin{pmatrix} -1 & -1& \sqrt{2} \\  -\sqrt{2} & 0 & \sqrt{2}  \\ 1& 1 & \sqrt{2} \end{pmatrix}.$$

It maps the 8 vertices $\left\{-1,1\right\}^3$ to points whose largest coordinate in absolute value is either $\sqrt{2}$ or $\sqrt{3}$ (half the time each). This shows
$$  \max_{A \in \mathbb{R}^{3 \times 3}} \beta(A) \geq \frac{ \sqrt{2} + \sqrt{3}}{2} \sim 1.573\dots.$$
We note that this matrix is \textit{not} orthogonal. This is interesting insofar as it shows that it may be optimal to have some built-in dependence to get the most biased results. The best orthogonal matrix we found was relatively close in terms of $\beta$
$$ Q = \begin{pmatrix} \frac{1}{2} & \frac12 & \frac{1}{\sqrt{2}} \\  -\frac{1}{2} & -\frac{1}{2} & \frac{1}{\sqrt{2}}  \\ \frac{1}{\sqrt{2}} & \frac{1}{\sqrt{2}} & 0 \end{pmatrix} \qquad \mbox{with} \qquad \beta(Q) = \frac{2 + 3 \sqrt{2}}{4} \sim 1.560\dots$$
\begin{figure}[h!]
\begin{tikzpicture}[scale=0.7]
\node at (0,0) {\includegraphics[width=0.3\textwidth]{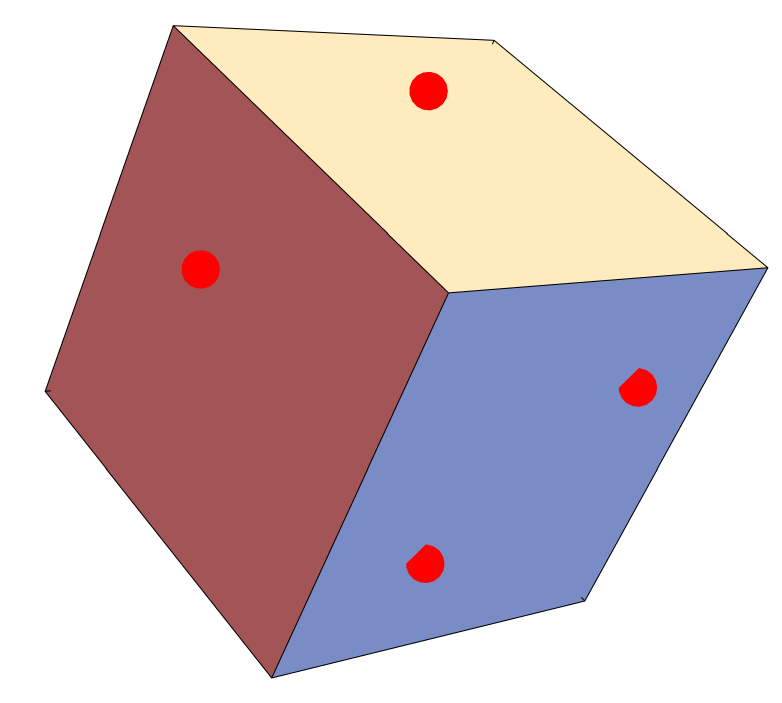}};
\end{tikzpicture}
\caption{Four of the eight points $Ax$ in relation to a cube centered in the origin of sidelength 2.7.}
\end{figure}

\subsection{The case $n=4$} 
The best example we found for $n=4$ is 
$$ A = \frac{1}{\sqrt{3}} \begin{pmatrix} 0 & 1 & -1 & 1 \\ 1 & -1 & 0 & 1 \\ -1 & -1 & -1 & 0 \\ -1 & 0 & 1 & 1 \end{pmatrix} $$
which shows that
$$  \max_{A \in \mathbb{R}^{4 \times 4}} \beta(A) \geq \sqrt{3} \sim 1.732\dots.$$
This matrix is orthogonal and it sends every vertex of the cube to a point with $\ell^{\infty}-$norm exactly $\sqrt{3}$. To make things more amusing, the solution is not unique. The matrix
$$ B = \frac{1}{\sqrt{3}} \begin{pmatrix} 1 & 1 & 1 & 0 \\ 1 & -1 & -1 & 0 \\ 1 & -1 & 1 & 0 \\ 1 & 1 & -1 & 0 \end{pmatrix} \qquad \mbox{also satisfies} \quad \beta(B) = \sqrt{3}$$
is not orthogonal and completely ignores the fourth coordinate: it maps $\left\{-1,1\right\}^4$ into a $3-$dimensional subspace. `Good' configurations ignoring an entire coordinate seems to be a common feature and seem to appear in other dimensions as well. Kunisky's construction (see the Proof of the Proposition for details) provides yet another example
$$C = \frac{1}{\sqrt{3}} \left(
\begin{array}{cccc}
 1 & 1 & 1 & 0 \\
 1 & 1 & -1 & 0 \\
 1 & -1 & 0 & 1 \\
 1 & -1 & 0 & -1 \\
\end{array}
\right) \quad \mbox{also satisfying} \quad \beta(C) = \sqrt{3}.$$

\subsection{The case $n=5$} The best example we found for $n=5$ is the matrix
$$ A = \frac{1}{2 \sqrt{3}} \left(
\begin{array}{ccccc}
 2 & 2 & 0 & 0 & 2 \\
 -2 & 2 & 0 & 2 & 0 \\
 -2 & 0 & 0 & -2 & 2 \\
 0 & -\sqrt{3} & \sqrt{3} & \sqrt{3} & \sqrt{3} \\
 0 & \sqrt{3} & \sqrt{3} & -\sqrt{3} & -\sqrt{3} \\
\end{array}
\right) $$
which shows that
$$  \max_{A \in \mathbb{R}^{5 \times 5}} \beta(A) \geq \frac{2+ 3\sqrt{3}}{4} \sim 1.799\dots.$$
It sends the 32 vertices to 8 vertices with maximal coordinate 2 and 24 vertices with maximal coordinate $\sqrt{3}$. Again, the matrix is not unique and 
$$
B = \frac{1}{2\sqrt{3}}\left(
\begin{array}{ccccc}
 -\sqrt{3} & -\sqrt{3} & 0 & \sqrt{3} & -\sqrt{3} \\
 2 & -2 & 0 & 0 & 2 \\
 \sqrt{3} & \sqrt{3} & 0 & -\sqrt{3} & -\sqrt{3} \\
 -2 & 0 & 0 & -2 & 2 \\
 0 & 2 & 0 & 2 & 2 \\
\end{array}
\right)
$$
has the exact same properties while also completely ignoring the third coin toss. There are also other types of near-maximizers. A sub-optimal matrix that keeps persistently showing in numerical experiments is
$$ C = \left(
\begin{array}{ccccc}
 \frac{1}{\sqrt{3}} & 0 & -\frac{1}{\sqrt{3}} & \frac{1}{\sqrt{3}} & 0 \\
 -\frac{1}{\sqrt{29}} & -\frac{3}{\sqrt{29}} & -\frac{3}{\sqrt{29}} & -\frac{3}{\sqrt{29}} & -\frac{1}{\sqrt{29}} \\
 -\frac{1}{\sqrt{3}} & 0 & 0 & \frac{1}{\sqrt{3}} & -\frac{1}{\sqrt{3}} \\
 \frac{1}{\sqrt{29}} & \frac{3}{\sqrt{29}} & -\frac{1}{\sqrt{29}} & -\frac{3}{\sqrt{29}} & -\frac{3}{\sqrt{29}} \\
 \frac{1}{2} & -\frac{1}{2} & \frac{1}{2} & 0 & -\frac{1}{2} \\
\end{array}
\right)$$
with $\beta(C) = 1.789$.

\subsection{The case $n=6$} 
The best example we found for $n=6$ is the matrix
$$ A = \frac{1}{2 \sqrt{3}} \left(
\begin{array}{cccccc}
 -\sqrt{3} & 0 & \sqrt{3} & -\sqrt{3} & 0 & -\sqrt{3} \\
 \sqrt{3} & 0 & -\sqrt{3} & \sqrt{3} & 0 & -\sqrt{3} \\
 \sqrt{3} & 0 & \sqrt{3} & \sqrt{3} & 0 & \sqrt{3} \\
 2 & -2 & 0 & -2 & 0 & 0 \\
 -\sqrt{3} & 0 & -\sqrt{3} & -\sqrt{3} & 0 & \sqrt{3} \\
 2 & 2 & 0 & -2 & 0 & 0 \\
\end{array}
\right)$$
showing that
$$  \max_{A \in \mathbb{R}^{6 \times 6}} \beta(A) \geq  \frac{\sqrt{3} + 2}{2} \sim 1.866\dots.$$
Much as in the case $n=5$, there appears to be a matrix that keeps popping up in numerical simulations that is very nearly as good and, much like when $n=5$, contains several entries featuring $\sqrt{29}$
 $$ B = \left(
\begin{array}{cccccc}
 \frac{1}{\sqrt{29}} & -\frac{3}{\sqrt{29}} & 0 & \frac{3}{\sqrt{29}} & \frac{1}{\sqrt{29}} & -\frac{3}{\sqrt{29}} \\
 \frac{1}{2} & 0 & \frac{1}{2} & 0 & \frac{1}{2} & \frac{1}{2} \\
 \frac{3}{\sqrt{29}} & \frac{3}{\sqrt{29}} & 0 & \frac{1}{\sqrt{29}} & -\frac{1}{\sqrt{29}} & -\frac{3}{\sqrt{29}} \\
 0 & 0 & 0 & -\frac{1}{\sqrt{3}} & \frac{1}{\sqrt{3}} & -\frac{1}{\sqrt{3}} \\
 \frac{1}{2} & -\frac{1}{2} & 0 & -\frac{1}{2} & -\frac{1}{2} & 0 \\
 -\frac{1}{2} & 0 & \frac{1}{2} & 0 & -\frac{1}{2} & -\frac{1}{2} \\
\end{array}
\right)$$
with $\beta(B) =  \left(6+2 \sqrt{3}+\sqrt{29}\right)/8 = 1.856\dots$. It is not clear to us whether this might be more than a curious coincidence. One might get the impression that some of these rows serve as convenient building blocks
that are then combined with other building blocks in other dimensions; it would be nice to understand this better.

\subsection{The case $n=7$} 
The best example we found for $n=7$ is the matrix
$$ A = \frac{1}{2} \left(
\begin{array}{ccccccc}
 0 & -1 & 1 & 0 & -1 & 0 & -1 \\
 0 & -1 & -1 & 0 & 1 & 0 & 1 \\
 0 & 0 & 1 & 0 & 1 & 1 & -1 \\
 0 & 0 & -1 & -1 & -1 & 1 & 0 \\
 0 & 0 & 1 & -1 & 1 & -1 & 0 \\
 0 & 0 & 2/\sqrt{3} & 0 & 0 & 2/\sqrt{3} & 2/\sqrt{3} \\
 0 & 0 & 1 & 0 & -1 & -1 & 1 \\
\end{array}
\right)$$
showing that
$$  \max_{A \in \mathbb{R}^{7 \times 7}} \beta(A) \geq  \frac{\sqrt{3} + 6}{4} \sim 1.933\dots.$$
Note that, again, an entire column is 0: the first coin toss is completely ignored.

\subsection{The case $n=8$} 
The best example for $n=8$ that we found is one considered by Kunisky \cite{kun}, the transpose of the matrix corresponding to the discrete Haar wavelet decomposition. This matrix has also recently proven to be useful in related work of Li \& Nikolov \cite{li}. It is given by

$$A= \frac{1}{2}\left(
\begin{array}{cccccccc}
 1 & 1 & 1 & 0 & 1 & 0 & 0 & 0 \\
 1 & 1 & 1 & 0 & -1 & 0 & 0 & 0 \\
 1 & 1 & -1 & 0 & 0 & 1 & 0 & 0 \\
 1 & 1 & -1 & 0 & 0 & -1 & 0 & 0 \\
 1 & -1 & 0 & 1 & 0 & 0 & 1 & 0 \\
 1 & -1 & 0 & 1 & 0 & 0 & -1 & 0 \\
 1 & -1 & 0 & -1 & 0 & 0 & 0 & 1 \\
 1 & -1 & 0 & -1 & 0 & 0 & 0 & -1 \\
\end{array}
\right)
$$

This matrix shows that
$$  \max_{A \in \mathbb{R}^{8 \times 8}} \beta(A) \geq 2.$$
In particular, it sends every point in $\left\{-1,1\right\}^8$ to a vector having exactly one entry of size 2 and that is the largest entry.
Kunisky \cite{kun} studies the matrix for a very different reason: if we normalize all the columns of the matrix to have $\ell^2-$norm 1 to arrive at a matrix $B$, then, for every $x \in \left\{-1,1\right\}^8$ we have that $\|B x\|_{\ell^{\infty}} = \sqrt{2} + 1/2 \sim 1.91\dots$ which proves that the Komlos constant is at least $\sqrt{2}+1/2$. It is curious that the same matrix appears to also do exceedingly well for our problem. Of course, it is conceivable that this matrix may not be the best one for our problem when $n=8$.  There is a near-miss with an interesting structure derived from the Hadamard matrix $H_8$, see Fig. 4.

\begin{figure}[h!]
\begin{tikzpicture}
\node at (0,0) {$ B = \frac{1}{\sqrt{7}} \left(
\begin{array}{cccccccc}
 1 & 1 & 1 & 1 & 1 & 1 & 1 & 0 \\
 1 & 1 & 1 & 0 & -1 & -1 & -1 & -1 \\
 1 & 1 & -1 & 0 & -1 & -1 & 1 & 1 \\
 1 & 1 & -1 & -1 & 1 & 1 & -1 & 0 \\
 1 & -1 & -1 & 1 & 1 & -1 & -1 & 0 \\
 1 & -1 & -1 & 0 & -1 & 1 & 1 & -1 \\
 1 & -1 & 1 & 0 & -1 & 1 & -1 & 1 \\
 1 & -1 & 1 & -1 & 1 & -1 & 1 & 0 \\
\end{array}
\right)$};
\node at (6.5,0) {\includegraphics[width=0.3\textwidth]{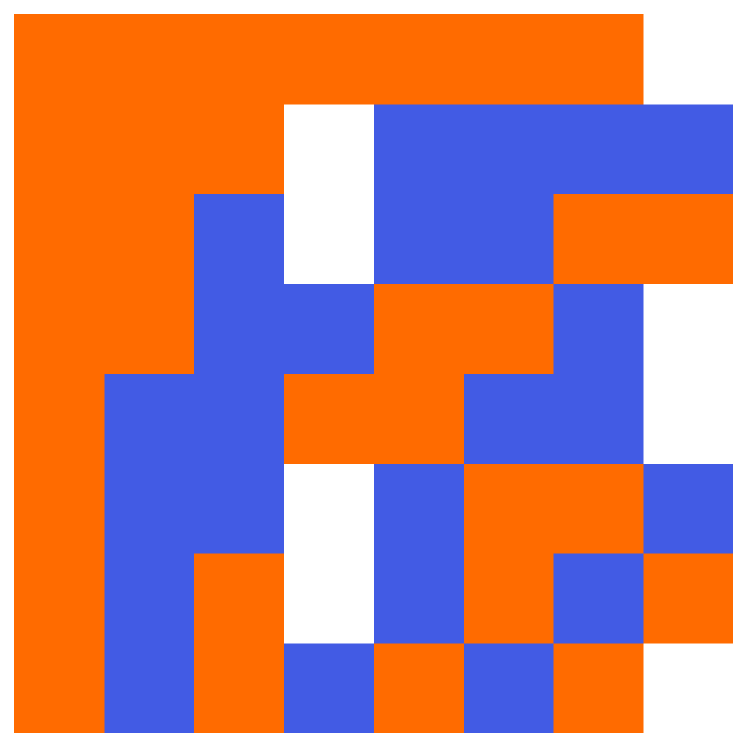}};
\end{tikzpicture}
\caption{Left: a near-optimal matrix for $n=8$. Right: visual representation, each entry is $\pm 1/\sqrt{7}$. $\beta(B) =  3 \sqrt{7}/4 \sim 1.984\dots.$}
\end{figure}

\section{Proofs}
\subsection{Outline.} We start with a quick outline of the arguments. Suppose $a \in \mathbb{R}^n$ is the first row of the matrix $A$ and $\|a\|_{\ell^2}=1$. Hoeffding's inequality can be used to deduce, for $t>0$,
$$ \mathbb{P}\left(\sum_{i=1}^{n}  \varepsilon_i a_{i} \geq t\right) \leq e^{-t^2/2}$$
which shows that the probability of $\left\langle a, x\right\rangle$ exceeding $ \sqrt{(2+\delta) \log{n}}$ is $\leq n^{-\delta/2} n^{-1}$. This is true for each row. The union bound shows that the probability of any of the $n$ rows exceeding $ \sqrt{(2+\delta) \log{n}}$ is $\leq n^{-\delta/2}$. Re-running the same argument with different parameters shows that those few that do indeed exceed that size are unlikely to exceed it by very much and do not significantly alter the average.
As for the lower bound, we will argue that the argument giving the upper bound is very nearly tight when each entry of $A$ is comprised of iid $\pm 1/\sqrt{n}$ entries. Then $\left\langle a, x\right\rangle$ follows a Bernoulli distribution. Tusn\'ady's lemma implies that, for a fixed vector $x_0 \in \left\{-1,1\right\}^n$ and a random matrix $A$ with iid $\pm 1/\sqrt{n}$ entries there are typically many entries of $Ax_0$ that are $\geq \sqrt{(2-\varepsilon) \log{n}}$. Double-counting shows that there exists at least one matrix $A_0$ such that $\|A_0 x\|_{\ell^{\infty}}$ large for most $x$.

\subsection{Proof of the upper bound.}
\begin{proof} Let $a \in \mathbb{R}^n$ be an arbitrary row of the matrix, normalized to $\|a\|_{\ell^2} = 1$. We consider $(Ax)_i = \left\langle a, x \right\rangle$. The proof of the upper bound is fairly straight-forward application of Hoeffding's inequality implying that
$$ \mathbb{P}\left( X_{a} \geq t\right) \leq e^{-t^2/2}$$
independently of $a$. We quickly recall the (standard) argument for the convenience of the reader. The
random variable
$ X = \sum_{i=1}^{n}  \varepsilon_i a_{i}$
satisfies, for all $x,t>0$ 
$$ \mathbb{P}(X \geq x) = \mathbb{P}( e^{tX} \geq e^{tx}) \leq e^{-t x} \cdot \mathbb{E}~ e^{tX}.$$
We have
$$ \mathbb{E} ~e^{t X} = \mathbb{E} \prod_{i=1}^{n} e^{\varepsilon_i t a_i} =  \prod_{i=1}^{n} \mathbb{E} e^{\varepsilon_i t a_i} = \prod_{i=1}^{n} \frac{e^{- ta_i} + e^{ t a_i}}{2} =  \prod_{i=1}^{n} \cosh( t a_i).$$
Using $\cosh{(x)} \leq \exp(x^2/2)$ and the $\ell^2-$normalization of $a$, we have
$$ \mathbb{E} ~e^{t X} \leq \prod_{i=1}^{n} e^{t^2 a_i^2/2} = e^{t^2/2}.$$
Setting $t=x$ and using the symmetry of the random variable
$$ \mathbb{P}\left(\left|\sum_{i=1}^{n}  x_i a_{i} \right| \geq t\right) \leq 2  e^{-t^2/2}.$$
Thus, for any $\varepsilon > 0$, we have
$$ \mathbb{P}\left(\left|\sum_{i=1}^{n}  x_i a_{i} \right| \geq \sqrt{(2+\varepsilon) \log{n}}\right) \leq 2  n^{-1 - \frac{\varepsilon}{2}}.$$
Applying the union bound, we deduce
$$ \mathbb{P}\left( \max_{1 \leq i \leq n} \| Ax\|_{\ell^{\infty}} \geq  \sqrt{(2+\varepsilon) \log{n}}\right) \leq 2  n^{-\varepsilon}.$$
It remains to ensure that the exceptional sets do not contribute a very large expected value. This is easy since
\begin{align*}
\mathbb{E}X &\leq  \sqrt{(2+\varepsilon) \log{n}} +  \sqrt{3 \log{n}} \cdot 2 n^{-\varepsilon} \\
&+ \sum_{k=3}^{\infty} n \cdot \mathbb{P}\left(\sqrt{k \log{n}} \leq \left|\sum_{i=1}^{n}  \varepsilon_i a_{i} \right| \leq \sqrt{(k+1)\log{n}}\right) \sqrt{(k+1) \log{n}} \\
&\leq \sqrt{(2+\varepsilon) \log{n}} + \sum_{k=3}^{\infty} n \cdot \mathbb{P}\left(  \left|\sum_{i=1}^{n}  \varepsilon_i a_{i} \right| \geq \sqrt{k \log{n}}\right)\sqrt{(k+1) \log{n}} + o(1)\\
&\leq  \sqrt{(2+\varepsilon) \log{n}} + \sum_{k=3}^{\infty} n \cdot n^{-k/2}\sqrt{(k+1) \log{n}} + o(1) \\
&\leq   \sqrt{(2+\varepsilon) \log{n}} + o(1).
\end{align*} 
Since $\varepsilon>0$ was arbitrary, we obtain the result.
\end{proof}

\subsection{Proof of the lower bound.}
We need an anti-concentration result that follows quickly from Tusn\'ady's Lemma \cite{bre, mas, tus}.
\begin{lemma} Let $X = \pm 1/\sqrt{n} \pm 1/\sqrt{n} \dots \pm 1/\sqrt{n}$ with each sign being iid. Then, for each $\varepsilon > 0$ there exists constants $\delta, c_{\delta} > 0$ (depending on $\varepsilon$) such that
$$ \mathbb{P}\left( X \geq \sqrt{(2 - \varepsilon) \log{n}}\right) \geq c_{\delta} \frac{n^{\delta}}{n}.$$
\end{lemma}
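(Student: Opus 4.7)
\subsection*{Proof plan for the Lemma}

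The plan is to pass from the Rademacher sum $X = S_n/\sqrt{n}$, where $S_n = \pm 1 \pm 1 \pm \cdots \pm 1$, to a standard Gaussian $Z$ via Tusn\'ady's coupling and then simply read off the desired polynomial lower bound from the classical Mills-ratio tail estimate. Concretely, Tusn\'ady's lemma (see \cite{bre,mas,tus}) produces, on a common probability space, a standard Gaussian $Z$ and the walk $S_n$ with
\[
 \bigl| S_n - \sqrt{n}\, Z \bigr| \;\le\; C\bigl(1 + Z^2\bigr),
\]
so that after dividing by $\sqrt{n}$,
\[
 |X - Z| \;\le\; \frac{C(1+Z^2)}{\sqrt{n}}.
\]

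Set $t = \sqrt{(2-\varepsilon)\log n}$ and fix some auxiliary $\varepsilon' \in (0,\varepsilon)$; later I will choose $\delta < \varepsilon'/2$. The first step is to observe that on the event $\{|Z| \le 2\sqrt{\log n}\}$ one has $|X-Z| = O(\log n / \sqrt{n}) = o(1)$, so in particular $|X - Z|$ is much smaller than $\sqrt{(2-\varepsilon)\log n} - \sqrt{(2-\varepsilon')\log n}$ once $n$ is large. Therefore
\[
 \mathbb{P}\bigl(X \ge t\bigr) \;\ge\; \mathbb{P}\Bigl(Z \ge \sqrt{(2-\varepsilon')\log n}\Bigr) \; - \; \mathbb{P}\bigl(|Z| > 2\sqrt{\log n}\bigr),
\]
and the second term is negligible, of order $e^{-2\log n} = n^{-2}$.

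The second step is the Gaussian estimate. The standard lower Mills bound
\[
 \mathbb{P}(Z \ge s) \;\ge\; \frac{1}{\sqrt{2\pi}}\,\frac{s}{s^{2}+1}\,e^{-s^{2}/2} \qquad (s \ge 1)
\]
applied at $s = \sqrt{(2-\varepsilon')\log n}$ gives
\[
 \mathbb{P}\Bigl(Z \ge \sqrt{(2-\varepsilon')\log n}\Bigr) \;\ge\; \frac{c}{\sqrt{\log n}} \cdot n^{-(2-\varepsilon')/2} \;=\; \frac{c}{\sqrt{\log n}}\cdot \frac{n^{\varepsilon'/2}}{n}.
\]
Since $(\log n)^{-1/2}$ dominates $n^{-\eta}$ for every $\eta > 0$, choosing any $\delta$ with $0 < \delta < \varepsilon'/2$ absorbs the $\sqrt{\log n}$ factor and yields $\mathbb{P}(X \ge t) \ge c_\delta n^{\delta}/n$ for all sufficiently large $n$; adjusting $c_\delta$ handles the finitely many small $n$.

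The main obstacle is really just bookkeeping: one must verify that the $O(\log n / \sqrt{n})$ discrepancy in the Tusn\'ady coupling is small compared with the $\sqrt{\log n}$ gap between $\sqrt{(2-\varepsilon)\log n}$ and $\sqrt{(2-\varepsilon')\log n}$, and that the polynomial factor $n^{\varepsilon'/2}$ from the Gaussian tail is large enough to swallow the $(\log n)^{-1/2}$ prefactor once one settles for a slightly smaller exponent $\delta$. Both are easy once the parameters are organized as above, so the proof reduces to (i) quoting Tusn\'ady and (ii) a one-line Mills-ratio calculation.
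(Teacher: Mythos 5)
Your proof is correct and uses the same two ingredients as the paper: Tusn\'ady's lemma to compare the scaled Rademacher sum with a standard Gaussian, and the Mills-ratio lower bound for the Gaussian tail. The only difference is cosmetic: you invoke Tusn\'ady in the coupling form $|S_n - \sqrt{n}Z| \le C(1+Z^2)$ and restrict to the event $\{|Z| \le 2\sqrt{\log n}\}$, while the paper uses the equivalent tail-comparison form $\mathbb{P}\bigl(B(n,1/2) \ge \tfrac{n+j}{2}\bigr) \ge \mathbb{P}\bigl(Y \ge 2\sqrt{n}(1-\sqrt{1-j/n})\bigr)$ and a Taylor expansion; both routes absorb the same $O(\log n/\sqrt n)$ error and both sacrifice a little exponent ($\delta < \varepsilon'/2$) to swallow the $(\log n)^{-1/2}$ prefactor. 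So this is essentially the paper's proof.
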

\begin{proof} The random variable $X = \pm 1/\sqrt{n} \pm 1/\sqrt{n} \dots \pm 1/\sqrt{n}$ is a rescaling of the standard binomial distribution $B(n,1/2)$ and, more precisely,
$$ \frac{\sqrt{n} \cdot X+n}{2} = B(n,1/2).$$
Therefore
$$ \mathbb{P}(X \geq c\sqrt{\log{n}}) = \mathbb{P}\left( B(n,1/2) \geq \frac{n + c \sqrt{n \log{n}}}{2} \right).$$
We use Tusn\'ady's Lemma in the following form (see \cite{bre} or \cite{mas}): if $B(n,1/2)$ denotes the binomial random variable and $Y$ denotes a standard $\mathcal{N}(0,1)$ Gaussian, then, for all integers $0 \leq j \leq n$,
$$ \mathbb{P}\left(B(n,1/2) \geq \frac{n+j}{2} \right) \geq \mathbb{P}\left( Y \geq 2\sqrt{n}\left(1 - \sqrt{1 - \frac{j}{n}}\right)\right).$$
One sees with a Taylor expansion that
$$ 1 - \sqrt{1 - \frac{j}{n}} =  \frac{j}{2n} + \mathcal{O}\left(\frac{j^2}{n^2}\right)$$
Thus
\begin{align*}
 \mathbb{P}(X \geq c\sqrt{\log{n}})  &\geq  \mathbb{P}\left( Y \geq 2\sqrt{n} \left(1 - \sqrt{1 - \frac{j}{n}}\right)\right) \\
 &=  \mathbb{P}\left( Y \geq c \sqrt{\log{n}} + \mathcal{O}\left(\frac{\log{n}}{n^{}}\right)\right).
 \end{align*}
At this point, we use a standard tail bound for the Gaussian
$$ \mathbb{P}\left( Y \geq x\right) \geq \frac{x}{x^2 + 1} \frac{1}{\sqrt{2\pi}} e^{-x^2/2}.$$
Setting $x = \sqrt{(2 - \varepsilon) \log{n}}$ gives the desired result.
\end{proof}

\begin{proof}
We consider all possible $n \times n$ matrices where each entry is $\pm 1/\sqrt{n}$
$$ \mathcal{S} =  \left\{-\frac{1}{\sqrt{n}}, \frac{1}{\sqrt{n}} \right\}^{n \times n}.$$
All rows are normalized in $\ell^2$. 
There are $2^{n^2}$ such matrices and we equip this set with the uniform probability measure. Our goal is to show
the existence of a single matrix $A_0 \in  \mathcal{S} $
such that, for `most' of the points of the unit cube the matrix sends the point to one with at least one large coordinate (where `most' means all but a fraction tending to 0 as $n \rightarrow \infty$). For a constant $c>0$ to be chosen later, we will bound the number of large matrix-point incidences
$$ \# \left\{ (A, x) \in  \mathcal{S}  \times \left\{-1,1\right\}^n: \|Ax\|_{\ell^{\infty}} \geq c \sqrt{\log{n}} \right\}.$$
The main argument will be to show, for a suitable choice of $c$, every single lattice point in the discrete cube $x_0 \in \left\{-1,1\right\}^n$ has the property that all but a vanishing fraction of matrices in $\mathcal{S}$ send it to a point with at least one large entry:
$$\forall~x_0 \in \left\{-1,1\right\}^n \quad \# \left\{ A \in  \mathcal{S} : \|Ax_0\|_{\ell^{\infty}} \geq c\sqrt{\log{n}} \right\} = (1-o(1)) \cdot \# \mathcal{S}. \quad (\diamond)$$
This shows that
$$ \# \left\{ (A, x) \in  \mathcal{S}  \times \left\{-1,1\right\}^n: \|Ax\|_{\ell^{\infty}} \geq c \sqrt{\log{n}} \right\} = (1-o(1)) \cdot \# S \cdot 2^n$$
which then forces the existence of $A_0 \in  \mathcal{S} $ such that
$$ \# \left\{ x \in \left\{-1,1\right\}^n: \|A_0x\|_{\ell^{\infty}} \geq c \sqrt{\log{n}} \right\} = (1-o(1)) \cdot 2^n.$$
Naturally, this shows slightly more: it shows that most random matrices have the desired property.
It remains to establish $(\diamond)$. We fix an abitrary $x_0 \in \left\{-1,1\right\}^n$. As can be seen from the symmetries of the probblem, all $x_0$ will behave identically. Choose a random $A \in \mathcal{S}$ by simply picking each coordinate $\pm 1/\sqrt{n}$ with equal likelihood. All entries and, in particular, all rows are independent of each other.
An arbitrary entry of the random vector $Ax_0$ is then distributed according to a rescaled Binomial distribution since
 $$ (Ax_0)_i = \pm \frac{1}{\sqrt{n}} \pm \frac{1}{\sqrt{n}} \dots \pm \frac{1}{\sqrt{n}}.$$
Using the Lemma, we conclude that
$$ \mathbb{P} \left((Ax_0)_i \geq \sqrt{(2-\varepsilon)\log{n}}\right) \geq c_{\delta} \frac{n^{\delta}}{n}$$
and thus, using independence of the rows of $A$,
$$ \mathbb{P}\left( \|Ax_0\|_{\ell^{\infty}} \leq \sqrt{(2-\varepsilon)\log{n}}\right) \leq \left(1 -c_{\delta} \frac{n^{\delta}}{n} \right)^n \leq e^{- c_{\delta} n^{\delta}}.$$

This shows that for any fixed $x_0 \in \left\{-1,1\right\}^n$ `most' matrices, meaning $1-o(1)$ of all, will have at least one large entry of size $ \sqrt{(2-\varepsilon)\log{n}}$. This proves $(\diamond)$. Since $\varepsilon>0$ can be chosen arbitrarily small, this then implies the result.
\end{proof}

\subsection{Proof of the Proposition}
\begin{proof}
The result follows from adapting a construction of Kunisky \cite{kun} who considers the sequence of matrices
$$ \begin{pmatrix} 1 \end{pmatrix}, \begin{pmatrix} 1 & 1 \\ 1 & -1 \end{pmatrix}, \begin{pmatrix} 1 & 1 & 1 & 0\\ 1 & 1 &-1 &0 \\ 1& - 1 & 0 & 1 \\ 1 & - 1 & 0 &-1 \end{pmatrix}, \left(
\begin{array}{cccccccc}
 1 & 1 & 1 & 0 & 1 & 0 & 0 & 0 \\
 1 & 1 & 1 & 0 & -1 & 0 & 0 & 0 \\
 1 & 1 & -1 & 0 & 0 & 1 & 0 & 0 \\
 1 & 1 & -1 & 0 & 0 & -1 & 0 & 0 \\
 1 & -1 & 0 & 1 & 0 & 0 & 1 & 0 \\
 1 & -1 & 0 & 1 & 0 & 0 & -1 & 0 \\
 1 & -1 & 0 & -1 & 0 & 0 & 0 & 1 \\
 1 & -1 & 0 & -1 & 0 & 0 & 0 & -1 \\
\end{array}
\right)$$
which are perhaps best read after taking a transpose: the transpose corresponds to the discrete wavelet transform of the Haar wavelet.
Up to a reordering of the rows and columns, these matrices can be generated iteratively via
$$ A_{n+1} = \begin{pmatrix} A_n && \mbox{Id}_{2^n \times 2^n} \\ A_n && -  \mbox{Id}_{2^n \times 2^n} \end{pmatrix}.$$
We first observe that $A_n \in \mathbb{R}^{2^n \times 2^n}$. Moreover, one can see by induction that $A_n$ maps the discrete $2^n-$dimensional cube to points with maximal coordinate $n+1$. This is clear when $n=0$. As for the induction, if $x = (x_1, x_2)$, then
$$ A_{n+1} x = \begin{pmatrix} A_n x_1 + x_2 \\ A_n x_1 - x_2 \end{pmatrix}.$$
By induction assumption $\| A_n x_1\|_{\ell^{\infty}}  = n+1$ and thus there exists an entry that is either $(n+1)$ or $-(n+1)$. In either case, we can find a new column whose absolute value is $n+2$. The second observation is that each row of $A_n$ has $n+1$ entries that are $+1$. Thus, taking $B_n = A_n/\sqrt{n+1}$ we obtain a matrix whose columns are all normalized in $\ell^2$ and for which
$$ \frac{1}{2^{2^n}} \sum_{x \in \left\{-1,1\right\}^{2^n}} \|B_n x\|_{\ell^{\infty}} = \sqrt{n+1}.$$

\end{proof}

\end{document}